\documentclass[10pt,a4paper]{article}
\usepackage{enumitem}
\newlist{steps}{enumerate}{1}
\setlist[steps, 1]{label = Step \arabic*:}
\usepackage{color}
\usepackage{commath}
\usepackage{graphicx}
\usepackage{adjustbox}
\usepackage{multirow}  
\usepackage{rotating}
\usepackage{csquotes}
\usepackage{caption}
\usepackage{subcaption}
\usepackage{enumitem}
\usepackage{float}
\usepackage{todonotes}
\usepackage{amsthm,amssymb,mathrsfs,setspace,amsmath}
\numberwithin{figure}{section}
\usepackage[bindingoffset=0.2in, left=0.5in, right=0.5in, top=1in, bottom=1in, footskip=.25in]{geometry}
\usepackage[colorlinks,citecolor=blue,urlcolor=blue,bookmarks=false,hypertexnames=true]{hyperref}
\hypersetup{citecolor=blue} 
\usepackage{color}
\usepackage[sort,numbers]{natbib}
\newtheorem{definition}{Definition}[section]

\newtheorem{theorem}{Theorem}[section]

\newtheorem{example}{Example}
\newcommand{\x}{F}

\newcommand{\z}{t}
\newcommand{\f}{\Gamma}
\newcommand{\e}{T}
\newcommand{\w}{\mathcal{W}}
\newcommand{\2}{k}
\newcommand{\3}{n}
\newcommand{\Z}{z}
\newcommand{\g}{m}
\newcommand{\h}{\rho}
\newcommand{\1}{s}
\newcommand{\4}{g}
\newcommand{\5}{M}
\newcommand{\6}{\Delta}
\newcommand{\R}{r}
\newcommand{\8}{l}
\newcommand{\7}{\zeta}
\newcommand{\9}{\Lambda}
\newcommand{\n}{V}

\title{ Semi-orthogonal Tribonacci Wavelets and Numerical Solutions of Nonlinear Singular BVPs Arising in a Chemical Reaction} 
\author{Ankita Yadav$^a$\thanks{$^a$ankita\_2321ma01@iitp.ac.in}, Amit K. Verma$^b$\thanks{$^b$Corresponding author: akverma@iitp.ac.in}, \\\small{\it{$^{a,b}$ Department of Mathematics,}} \\\small{\it{Indian Institute of Technology Patna,}}\\\small{\it{ Bihta, Patna 801103, (BR) India.}}} 
\date{\today}

\begin{document}

\maketitle

\begin{abstract}
    In this article, we introduce a  semi-orthogonal tribonacci wavelet and develop a  semi-orthogonal tribonacci wavelet collocation method, offering an effective numerical method for solving a class of  non-linear singular BVPs.  
\end{abstract}
\textit{Keywords:} Tribonacci wavelet; bvp4c; Mathematica; Lane-Emden; Emden-Fowler; Collocation method; Quasilinearization; Singular;  Perturbation.\\
\textit{AMS Subject Classification:} 65T60, 34B16, 34D15
\section{Introduction} 
In thermal explosion theory, the critical condition for ignition is defined as the point where the heat generated by the chemical reaction exactly balances the heat lost to the surroundings. The governing equation for thermal balance, accounting for the heat generated by the chemical reaction and the heat conducted away, can be expressed as \cite{chambre1952solution}
\begin{equation}\label{P3_E1}
     \9 \nabla^{2} \mathcal{T} = - Q \n,
\end{equation}
where $\mathcal{T}$ is the gas temperature, Q is the  heat of reaction, $\9$ is the thermal  conductivity, $\n$ is the  reaction velocity, and $  \nabla^{2} $ is the  Laplacian operator. The reaction \eqref{P3_E1} is assumed to be monomolecular, with its rate governed by the Arrhenius law
\begin{equation}\label{P3_E2}
    \n = c\,a \exp \Big ( \frac{-E}{ \mathbb{R} \mathcal{T}} \Big ),
\end{equation}
 where $c$ is the  concentration of the  reactant, $a$ is the  frequency factor  and $E$ is the  energy of activation of the reaction. Hence, incorporating \eqref{P3_E1} in \eqref{P3_E2} yields
 \begin{equation}\label{APS_eq1}
     \nabla^{2} \mathcal{T} = -\frac{ Q}{\9}\, c\,a  \exp\Big ( \frac{-E}{\mathbb{R} \mathcal{T}} \Big ).
 \end{equation}
After some approximations and  circular symmetry in \eqref{APS_eq1}, Chambre \cite{chambre1952solution} arrived at  the following Poisson-Boltzmann equation:
 \begin{equation*}\label{P3_E3}
     \frac{d^{2}\theta}{d z^{2}} +\frac{\kappa}{z} \frac{d\theta}{dz} = - \delta\,\exp (\theta),
 \end{equation*}
where \( \delta \) is the appropriate constant, and \( \kappa \) represents the geometry of the vessel:
 \begin{equation*}\label{P3_E4}
 \kappa=
\begin{cases}
    0, & \text{for infinite plane-parallel vessel}, \\
    1, & \text{for cylindrical vessel where length is much greater than radius}, \\
    2, & \text{for spherical vessel},
\end{cases}
 \end{equation*}
subject to the following boundary conditions:
\begin{equation*}\label{b.c.}
     \theta' (0) = 0,\;\;\; \theta(1) = 0.
\end{equation*}

To generalize the framework introduced in the above model, we consider the following class of nonlinear singular boundary value problems (NSBVPs) \cite{chambre1952solution}
\begin{equation}\label{P3_L1}
\1''(\z) + \frac{\alpha}{\z} \1' (\z) + g (\z, \1(\z)) = 0, \;\;\;\; 0 < \z  \leq 1, 
\end{equation}
subject to one of the specified boundary conditions:
\renewcommand{\labelenumi}{\alph{enumi})}
\begin{eqnarray}
&& \label{P3_eqn311} \1(0)  =  \7_1,~ \1(1) = \7_2, \\
&& \label{P3_eqn411} \1'(0)  =  \7_3, ~ m \1(1) +n \1'(1) = \7_4,~(m>0,n\geq0),
\end{eqnarray}
where $\alpha(\geq 0$) is the shape factor and $ \7_1,  \7_2, \7_3$, and $\7_4$ are arbitrary constants. The nonlinear term $g (\z, \1(\z))$ is continuous in $\z$ and sufficiently smooth with respect to $\1$. This equation also appears in diverse scientific and engineering fields, including the equilibrium of isothermal gas spheres \cite{chandrasekhar1957introduction}, thermal explosions in cylindrical vessels \cite{chambre1952solution}. Emden-Fowler type equations (EFTE), extensively studied by Fowler \cite{lane1870}, play a crucial role in modeling self-gravitating systems, stellar structures, and various other physical processes.

NSBVPs have attracted significant  attention since their inception, owing to their occurrence in a wide range of real-world applications. In recent years, numerous researchers have concentrated on developing efficient numerical methods to solve the NSBVPs \cite{Khuri2010, lepik2005, kayenat2022, baxley1998,chambre1952solution, chandrasekhar1957introduction, dickey1989, taylor2020,Majak2009,Ratas2021,Khan2021,Shahna2020,raza2020non}. The primary challenge in solving \eqref{P3_L1} stems from its singularity at $\z = 0$. Several numerical techniques have been developed to solve singular boundary value problems (SBVPs), including approaches based on cubic splines \cite{lSingh2014, Khuri2010}, which integrate modified decomposition methods with cubic spline collocation by dividing the domain into subintervals. Finite difference methods have also been successfully applied in this context \cite{kayenat2022}. Despite their effectiveness, these methods often involve complex formulations and substantial computational effort, particularly when addressing nonlinear problems. This underscores the need for alternative strategies that are both computationally efficient and easier to implement while preserving accuracy.

Alongside these developments, wavelet-based collocation methods have seen increasing application in the numerical solution of differential equations. These methods combine the localization properties of wavelets with the flexibility of collocation techniques, offering a powerful tool for numerical approximation. The wavelet based collocation approach, in particular, is valued for its simplicity, computational efficiency, and fast convergence, making it a preferred choice for a wide range of applications.


Over the years, numerous wavelets have been developed, including orthogonal and semi-orthogonal wavelets. Orthogonal wavelets encompass well-known families such as the Daubechies wavelet \cite{daubechies1992}, Haar wavelet  \cite{verma2019}, Legendre wavelet \cite{legendre2011} and Bernoulli wavelet \cite{bernoulli2014}. Although Daubechies wavelets are compactly supported and orthogonal, they lack a closed-form expression, making analytical differentiation and integration infeasible. In contrast, the Haar wavelet is the only real-valued wavelet that is both compactly supported and orthogonal with an explicit closed-form expression. 

The Haar wavelet, known for its simplicity and low computational cost, has been widely used in scientific and engineering applications. However, it suffers from a lack of smoothness due to its discontinuous nature. As a result, such bases do not provide significant performance improvements. To overcome these limitations associated with orthogonal wavelets, Goswami et al. \cite{goswami1995} applied the concept of semi-orthogonal wavelets in their work. In this framework, individual wavelets within the same subspace are not necessarily orthogonal to each other, but the wavelet subspaces remain mutually orthogonal. These semi-orthogonal wavelets possess both compact support and closed-form expressions. Notable constructions of such semi-orthogonal  wavelets can be found in \cite{chui1992,Jawerth1994,Sweldens1994,Unser1993} and the references there in. Examples of semi-orthogonal wavelets include the Taylor wavelet \cite{taylor2020} and the Fibonacci wavelet \cite{fibonacci2020}. 

In this paper, we introduce a novel semi-orthogonal wavelet, coined as semi-orthogonal Tribonacci wavelet (SOTW), and employ it to approximate functions belonging to the $L^2(\mathbb{R)}$ space, such as $\frac{\sin{\z}}{\z}$, $\z\log{\z}$, and others. Additionally, we propose a collocation method utilizing SOTW named semi-orthogonal tribonacci wavelet quasilinearization collocation method (SOTWQCM). Using SOTWQCM, we  solve Lane-Emden equations, third-order Emden-Fowler equations, and singular perturbation problems very effectively and accurately. The maximum error of the  SOTWQCM solution is compared with that of other approaches, including the non-standard finite difference (NSFD) method \cite{kayenat2022}, the bvp4c MATLAB solver \cite{shampine2003}, and wavelet-based methods such as the Taylor wavelet \cite{taylor2020}, Fibonacci wavelet \cite{fibonacci2020}, and Haar wavelet \cite{Verma2022}.  The comparison provides evidence highlighting the superiority of the proposed wavelet method.

The paper is structured as follows: Section \ref{Preliminary} focuses on introducing the tribonacci polynomial based on the tribonacci sequence and classification of wavelets. In section  \ref{TW}, we detail the construction of the SOTW and demonstrate its use in function approximation through several examples. In Section \ref{Method}, we provide the methodology and  convergence analysis, which include the quasilinearization technique along with the SOTW collocation method. Section \ref{Numerical} presents the numerical illustrations,  algorithm for SOTWQCM and test examples based on the SOTW. Section \ref{conclusion_P3} serves as the concluding part of the paper.
\section{Preliminary}\label{Preliminary}
This section explores the origins of the tribonacci sequence and tribonacci polynomials.
\subsection{The Tribonacci Polynomial}\label{tribonacci_poly}
The tribonacci numbers start with three initial terms, and each subsequent term is determined by the sum of the three preceding terms \cite{feinberg1963fibonacci}. In other words, the tribonacci sequence denoted by  $\f_{n}$, are generated by the following  recurrence relation:
\begin{equation*}\label{P3_E5}
    \f_{n} = \f_{n-1}+ \f_{n-2}+\f_{n-3}; \;\;\;  n \ge 4,
\end{equation*}
with the initial condition $\f_{1} = 1 = \f_{2}$, $\f_{3} = 2$.
 The tribonacci ratio $\frac{\f_{n+1}}{\f_{n}}$ converges to an irrational number $ 1.839286755214 \cdots$. 
 The tribonacci polynomials are expressed through the following general formula \cite{koshy2019fibonacci}:
 \begin{equation}\label{eq1_P3}
\e_{\8}(\z)=
    \begin{cases}
        1, & \text{if } \8 = 0 ,\\
        \z, & \text{if } \8 = 1, \\
        \z^{2}, & \text{if } \8 = 2, \\
        \z^{2}\e_{\8-1}(\z) + \z \e_{\8-2}(\z) + \e_{\8-3}(\z),  & \text{if } \8 \ge 3. \\
    \end{cases}
\end{equation}
\subsection{Orthogonal vs. Semi-orthogonal Wavelets}\label{tribonacci_wavelet}
In this section, we explore the fundamental concepts underlying wavelet theory, starting with basic definitions and properties essential for understanding their application in function approximation. We begin by revisiting orthogonal wavelets and we also introduce the notion of semi-orthogonal wavelets.
\subsubsection{Orthogonal Wavelets}\label{P3_OW}
A function $\hat{\psi}
 \in L^2(\mathbb{R})$ is  called  an orthogonal  wavelet if the family $ \hat{\psi}
_{j, k}(\z) = 2^{\frac{j}{2}} ~\hat{\psi}
 (2^j\z  - \2), \;\; j, \2 \in \mathbb{Z},$
   is an orthonormal basis of $L^2(\mathbb{R}),$ i.e., 
   \begin{equation*}\label{P3_W2}
       \langle \hat{\psi}
_{j,k}, \hat{\psi}
_{l,m} \rangle = \delta_{j,l}.\delta_{\2,m}, \;\; j,\2, l, m \in \mathbb{Z},
   \end{equation*}
   where $\delta_{j,l}$ represents the  Kronecker delta function. 
   
 Any arbitrary function  $ \4(\z) \in L^2(\mathbb{R})$ can be approximated by a wavelet series given by  \cite{chui1992}
   \begin{equation*}\label{P3_W1}
       \4(\z) = \sum_{j,\2 = - \infty}^{\infty}\h_{j,\2} \hat{\psi}
_{j, \2}(\z).
   \end{equation*}
   where
   \begin{equation*}
       \h_{j,\2} = \langle\4(\z), \hat{\psi}
_{j,\2} \rangle =\int_{-\infty}^{\infty} \4(\z) \overline{2^{\frac{j}{2}} ~\hat{\psi}
 (2^j \z  - \2)}\,d\z.
   \end{equation*}
   However, in general, we do not require  $\hat{\psi}
_{j,\2}(\z)$ to form an orthonormal basis of \( L^2(\mathbb{R}) \) to approximate a function \cite{chui1992}. In fact, a stable basis, as described below, is sufficient.
   \begin{definition} [\cite{chui1992}]\label{def1}
       A function $\psi \in L^2(\mathbb{R})$ is called  an R-function if $\w_{j,\2} (\z) = 2^{\frac{j}{2}} ~\w (2^j\z  - \2), \;\; j, \2 \in \mathbb{Z},$ is a Riesz basis of $ L^2(\mathbb{R})$, in the sense that  the linear span  of $\w_{j,\2}(\z), j, \2 \in \mathbb{Z}$ is dense in $L^2{(\mathbb{R})}$ and $\exists$  constant $C$ and $D$ exist with $0< C \leq D \leq \infty$, such that 
       \begin{equation*}
           C \|\h_{j,\2}\|_{l^2}^{2} \leq  \left\lVert \sum_{j = - \infty}^{\infty}\sum_{\2 = - \infty}^{\infty} \h_{j, \2} \w_{j, \2}  \right\rVert_{2}^{2}\leq D\|\h_{j,\2}\|_{l^2}^{2},
       \end{equation*}
        for all  doubly bi-infinte square  summable sequence $\h_{j,\2}$, i.e.,
        \begin{equation*}
            \|\h_{j,\2}\|_{l^2}^{2}:= \sum_{j = - \infty}^{\infty}\sum_{\2 = - \infty}^{\infty} |\h_{j,\2}|^2 < \infty.
        \end{equation*}
   \end{definition}
   Suppose that $\psi $ is an R-function then there is a unique Riesz basis $\left\{\w^{j,\2}  \right\}$ of $ L^2(\mathbb{R})$ which is a dual  to $\left\{\w_{j,\2}\right\}$ in the sense that \cite{chui1992}
   \begin{equation*}
        \langle \w_{j,k}, \w^{l,m} \rangle = \delta_{j,l}.\delta_{\2,m}, \;\; j,\2, l, m \in \mathbb{Z},
   \end{equation*}
  where $\delta_{j,l}$ represents the  Kronecker delta function. 
  
  Every function $\4(\z) \in L^2(\mathbb{R})$ has the following  series expansion as follows:
   \begin{equation}\label{series}
       \4(\z) = \sum_{j,\2 = - \infty}^{\infty} \langle\4(\z) , \w_{j,\2}(\z)\rangle\w^{j,\2}(\z).
   \end{equation}
   For \eqref{series} to qualify as  a wavelet series, there must exist a function $\overline{\psi} \in L^2(\mathbb{R})$ such that the dual basis $\left\{\w^{j,\2}\right\}$ appearing in series~\eqref{series} is generated from $\overline{\psi}$ by the relation $\w^{j,\2}(\z)=\overline{\w}_{j,\2}(\z) $, where
   \begin{equation}\label{dual}
       \overline{\w_{j, k}(\z)} := \overline{2^{\frac{j}{2}} ~\w (2^j\z  - \2)},
   \end{equation}
   as described in \cite{chui1992}. However, since this condition is not  necessarily satisfied, we conclude that  the series \eqref{series} does not in general, represent a wavelet series.
    So, we obtain the  series expansion as follows: 
    \begin{equation}\label{wavelet series}
       \4(\z) = \sum_{j,\2 = - \infty}^{\infty} \langle\4(\z) , \w_{j,\2}(\z)\rangle \overline{\w}_{j,\2}(\z).
   \end{equation}
   \begin{definition} [\cite{chui1992}]\label{wavelet}
       An R-function  $\psi \in L^2(\mathbb{R})$ is called a wavelet if  there exist a function $\overline{\psi} \in L^2(\mathbb{R})$ such that $\left\{\w_{j,\2}\right\}$ and $\left\{\overline{\w_{j,\2}}\right\}$ are  the dual basis  of $L^2{(\mathbb{R})}$. If $\psi$ is a wavelet then $\overline{\psi}$ is called  a dual wavelet  related  to $\psi$.
   \end{definition}
   Every wavelet $\psi$, orthogonal or not, generate  a wavelet series  representation of $\4(\z) \in L^2(\mathbb{R})$,
   \begin{equation}\label{P3_L2}
       \4(\z) = \sum_{j,\2 = - \infty}^{\infty} \h_{j,\2}\w_{j,\2}(\z),
   \end{equation}
   where  the wavelet coefficient $\h_{j,\2}$ are given as follows:
   \begin{equation}\label{P3_L3}
       \h_{j,\2} = \langle\4(\z), \w_{j,\2} \rangle.
   \end{equation}
   \begin{definition}[\cite{chui1992}]\label{semi-orthogonal wavelet }
       A wavelet $\psi$  in $L^2 (\mathbb{R})$ is called a semi-orthogonal wavelet  if the Riesz basis $\left\{\w_{j,\2}\right\}$ it generates satisfies 
       \begin{equation*}\label{P3_W4}
           \langle \w_{j,\2}, \w_{l,m} \rangle = 0, \;\;\; j \neq l, \; j,\2, l, m \in \mathbb{Z}.
       \end{equation*}
   \end{definition}
   \section{ The Semi-orthogonal Tribonacci Wavelet} \label{TW}
The mother wavelet generates a family of functions through scaling  and shifting of itself, and these are referred to as wavelets. The discrete wavelet  family is presented in the following manner:  
 \begin{equation*}\label{P3_E7}
     \w _{\2, \3} (\z) =  |a_{0}| ^{\frac{\2}{2}} ~ \w ( a_{0} ^{\2} ~\z - \3 ~b_{0}),
 \end{equation*}
 in which $ \w _{\2, \3} (\z)$  serves as a wavelet basis in  $ L^{2} (\mathbb{R})$.
 
The SOTW are defined in the following manner:
 \begin{equation}\label{eq2_P3}
\w_{\3, m}(\z)=
    \begin{cases}
         2 ^{\frac{\2 - 1}{ 2}} \hat{\e}_{\g} (2^{\2 -1 } \z  - \3 +1 ) , & \text{if }  \frac{\3 -1 }{ 2^{\2 -1 }}  \leq \z < \frac{\3}{ 2^{\2 -1}} ,\\
        0, & \text{otherwise}, \\
    \end{cases}
\end{equation}
with
\begin{equation*}\label{P3_E8}
    \hat{\e}_{\g} ( \z ) =  \frac{1}{\sqrt{\Z_{\g}}} \e _{\g}(\z),
\end{equation*}
and 
\begin{equation*}\label{P3_E9}
    \Z _{\g} = \int_{0}^{1} \e _{\g}^{2} (\z) \,d\z.
\end{equation*}
Here, $ \frac{1}{\sqrt{\Z_{\g}}}$ is a  normalization factor, $\g = 0, 1, 2, \cdots, M - 1$ represents the order of the tribonacci polynomial $T_{\g} (\z)$,  $\2 = 1, 2, 3, \cdots$, and $\3 = 1, 2, 3, \cdots, 2^{\2 - 1}$. In this context, $\2$ signifies the resolution level, respectively. 

For instance, let us take $ \2 = 3 , M = 3 $, we  obtain the SOTW basis as follows:
\begin{equation*}\label{P3_E11}
\begin{array}{ll}
\displaystyle
\w_{1, 0}(\z) =
\begin{cases}
2, & \text{if } 0 \leq \z < \frac{1}{4}, \\
0, & \text{otherwise},
\end{cases}
&
\displaystyle
\w_{1, 1}(\z) =
\begin{cases}
8\sqrt{3}, & \text{if } 0 \leq \z < \frac{1}{4}, \\
0, & \text{otherwise},
\end{cases}
\\[3ex]
\displaystyle
\w_{1, 2}(\z) =
\begin{cases}
32 \sqrt{5} \z^2, & \text{if } 0 \leq \z < \frac{1}{4}, \\
0, & \text{otherwise},
\end{cases}
&
\displaystyle
\w_{2, 0}(\z) =
\begin{cases}
2, & \text{if } \frac{1}{4} \leq \z < \frac{1}{2}, \\
0, & \text{otherwise},
\end{cases}
\\[3ex]
\displaystyle
\w_{2, 1}(\z) =
\begin{cases}
2\sqrt{3}(4\z - 1), & \text{if } \frac{1}{4} \leq \z < \frac{1}{2}, \\
0, & \text{otherwise},
\end{cases}
&
\displaystyle
\w_{2, 2}(\z) =
\begin{cases}
2\sqrt{5}(4\z - 1)^2, & \text{if } \frac{1}{4} \leq \z < \frac{1}{2}, \\
0, & \text{otherwise},
\end{cases}
\\[3ex]
\displaystyle
\w_{3, 0}(\z) =
\begin{cases}
2, & \text{if } \frac{1}{2} \leq \z < \frac{3}{4}, \\
0, & \text{otherwise},
\end{cases}
&
\displaystyle
\w_{3, 1}(\z) =
\begin{cases}
2\sqrt{3}(4\z - 2), & \text{if } \frac{1}{2} \leq \z < \frac{3}{4}, \\
0, & \text{otherwise},
\end{cases}
\\[3ex]
\displaystyle
\w_{3, 2}(\z) =
\begin{cases}
2\sqrt{5}(4\z - 1)^2, & \text{if } \frac{1}{2} \leq \z < \frac{3}{4}, \\
0, & \text{otherwise},
\end{cases}
&
\displaystyle
\w_{4, 0}(\z) =
\begin{cases}
2, & \text{if } \frac{3}{4} \leq \z < 1, \\
0, & \text{otherwise},
\end{cases}
\\[3ex]
\displaystyle
\w_{4, 1}(\z) =
\begin{cases}
2\sqrt{3}(4\z - 3), & \text{if } \frac{3}{4} \leq \z < 1, \\
0, & \text{otherwise},
\end{cases}
&
\displaystyle
\w_{4, 2}(\z) =
\begin{cases}
2\sqrt{5}(4\z - 3)^2, & \text{if } \frac{3}{4} \leq \z < 1, \\
0, & \text{otherwise}.
\end{cases}
\end{array}
\end{equation*}
\begin{figure}[H] 
  \centering
  \includegraphics[width=1.0\textwidth]{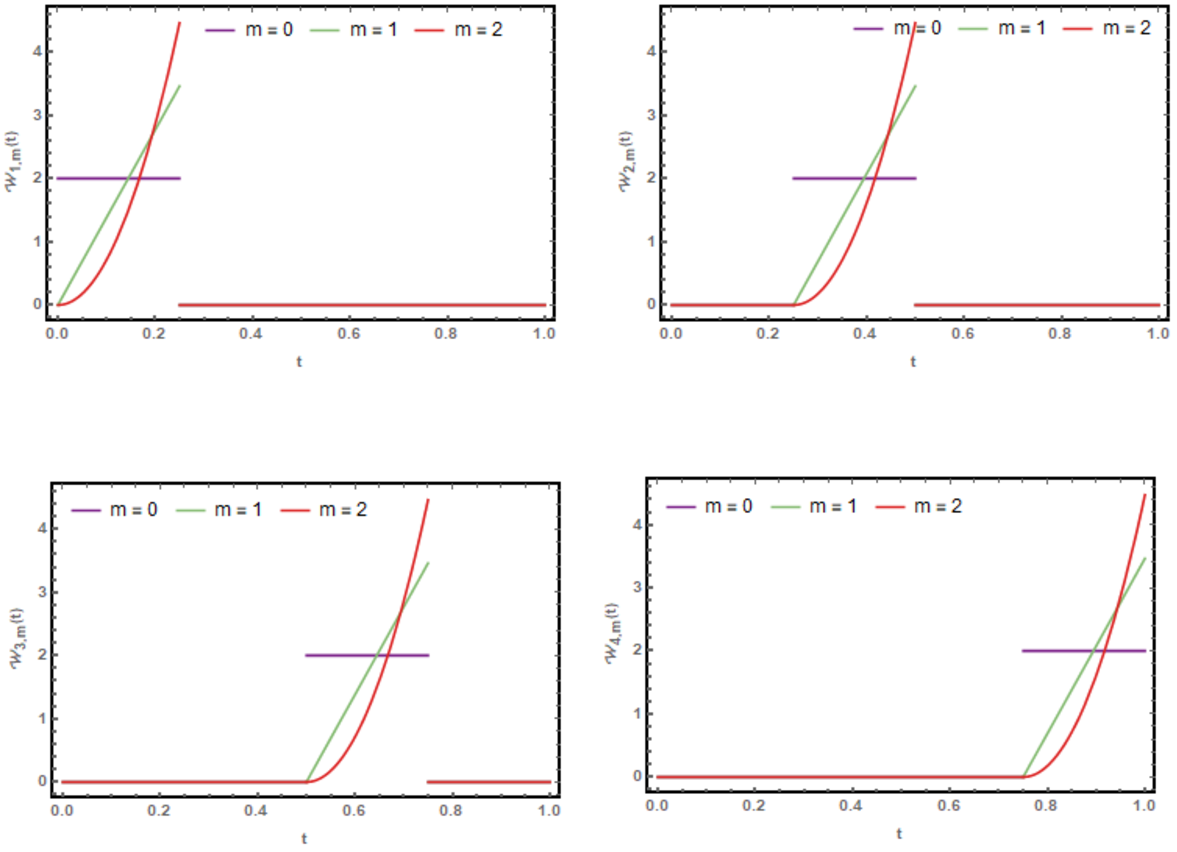} 
 \caption{Graph of the SOTW for $\2 = 3$,  $M= 3$.} 
 \label{P3_fig1}
\end{figure}
\subsection{Function Approximation using SOTW}\label{FAUTW}
An arbitrary function  $\4(\z)  \in  L^{2} [0, 1] $ can be approximately expanded in the following way:
   \begin{eqnarray}
&& \label{eq4_P3}
      \4 (\z) \approx \sum_{\3 = 1}^{2^{\2 - 1}} \sum_{\g = 0}^{M - 1} \h_{\3, \g} \w _{\3, \g} (\z),\\
&& \label{P31_eq4}
       \4(\z)  \approx \sum_{\3 = 1}^{2^{\2 - 1}} [ \h_{\3, 0} \w_{\3, 0}+\h_{\3, 1} \w_{\3, 1}+ \cdots+ \h_{\3, M -1 } \w_{\3, M - 1}],\\
       && \label{P31_eq5}
       \4(\z)  \approx  \h_{1, 0} \w_{1, 0}+ \cdots+ \h_{1, M -1 } \w_{1, M - 1}+\cdots+ \h_{2^{\2-1}, 0} \w_{2^{\2 - 1}, 0}+ \cdots+\h_{2^{\2 - 1}, M -1 } \w_{2^{\2 - 1}, M - 1}.
       \end{eqnarray}
        To simplify the expression, we multiply the above equation \eqref{P31_eq5} by $\w_{\3, \g}(\z)$ and integrate between $0$ to $1$, where $\3 = 1, 2, 3, 4, \cdots, 2^{\2 - 1},$ $ \;m = 0, 1, 2, 3, \cdots, M - 1,$ then we get  $ 1 \times 2^{\2 -1} M$ vectors $G$ as follows:
      \begin{eqnarray}
    && G = \left[ \4_{\3, m} \right], \;\; \3 = 1, 2, 3, 4, \cdots, 2^{\2 - 1}, \;\;m = 0, 1, 2, 3, \cdots, M - 1,\\
    && \4_{\3, m} = \int_{0}^{1} \4(\z) \w_{\3, m}(\z) \,d\z.
  \end{eqnarray}
  Now, we define the vectors  $B$ and $\w (\z)$, each of size $ (1 \times 2^{\2 -1} M)$  which are given as follows:
\begin{eqnarray}
&& \label{eq5_P3}
B= [ \h _{1, 0}, \h_{1, 1}, \cdots , \h_{1, M -1}, \h_{2,0}, \h_{2,1}, \cdots, \h_{2, M-1}, \cdots, \h_{2^{\2-1},0}, \h_{2^{\2-1},1}, \h_{2^{\2-1},2}, \cdots, \h_{2^{\2-1},M-1}], \\
   &&  \w(\z) = [ \w _{1, 0}, \w_{1, 1}, \cdots , \w_{1, M -1}, \w_{2,0}, \cdots, \w_{2, M-1}, \cdots, \w_{2^{\2-1},0}, \w_{2^{\2-1},1}, \w_{2^{\2-1},2}, \cdots, \w_{2^{\2-1},M-1}].
\end{eqnarray}
In equation, \eqref{eq4_P3}, \eqref{P31_eq4}, \eqref{P31_eq5} and  \eqref{eq5_P3}, $\h_{ \3, m}$, $\3 = 1, 2, 3, \cdots $ 
 $2^{\2 - 1}$, $m = 0,1, 2, 3, 4, \cdots, M -1$ are the SOTW  expansion coefficient of the function $\4( \z )$ in the $ n^{th} $ subinterval  $ [\frac{ \3 - 1}{ 2^{\2 - 1}} , \frac{n}{2^{\2 - 1}} ]$. Using  \eqref{eq5_P3}, the vector coefficients  $ \h_{n, m}$ can be achieved as :
 \begin{align*}
     B = G D^{-1},
 \end{align*}
 where,
 \begin{eqnarray}
  && D = \langle \w, \w \rangle =  \int_{0}^{1} \w^{T}(\z)\w(\z)  \,d\z.
  \end{eqnarray}
\begin{example}\label{P3_Ex1}
Consider a function $g(t)$ given by
\begin{align*}
      \4(\z ) =  \frac{\sin \z}{\z}.
\end{align*}
\end{example}
Upon performing the computations utilizing the SOTW, the resulting SOTW series with SOTW coefficients for $ \2 = 1$ and $ M = 5$ are as follows: 
\begin{eqnarray*}\label{SOTW1}
     \4(\z)  \approx  0.99101 \, \w_{1, 0} (\z)+  0.000262147 \, \w_{1, 1} (\z) - 0.0824239 \, \w_{1, 2} (\z)+  0.0184607 \, \w_{1, 3} (\z) - 0.000668257 \, \w_{1, 4} (\z).
\end{eqnarray*}
In Figure \ref{P3_fig2}, we present a comparison between the original function and its approximation using the SOTW  for $ \2 = 1$ and $ M = 5$. The figure also illustrates the absolute error, highlighting the accuracy of the approximation.

\begin{example}\label{P3_Ex2}
Consider a function $g(t)$ given by
\begin{align*}\label{P3_E12}
      \4(\z ) =  \z \log\z.
\end{align*}
\end{example}
Upon performing the computations utilizing the SOTW, the resulting SOTW  series with SOTW  coefficients for $\2 = 1 $ and $ M= 7$ are as follows:
\begin{eqnarray*}\label{SOTW2}
     \4(\z)  \approx  14.999 \, \w_{1, 0} (\z) - 17.4469 \, \w_{1, 1} (\z) +11.9286 \, \w_{1, 2} (\z) - 34.2645 \, \w_{1, 3} (\z) +51.9891 \, \w_{1, 4} (\z) \\\quad \quad\quad - 38.6191 \, \w_{1, 5} (\z)+12.6573 \, \w_{1, 6} (\z).
\end{eqnarray*}
In Figure \ref{P3_fig2}, we present a comparison between the original function and its approximation using the SOTW for $\2 = 1 $ and $ M= 7$. The figure also illustrates the absolute error, highlighting the accuracy of the approximation.
\begin{figure}[H] 
  \centering
  \includegraphics[width=1.0\textwidth]{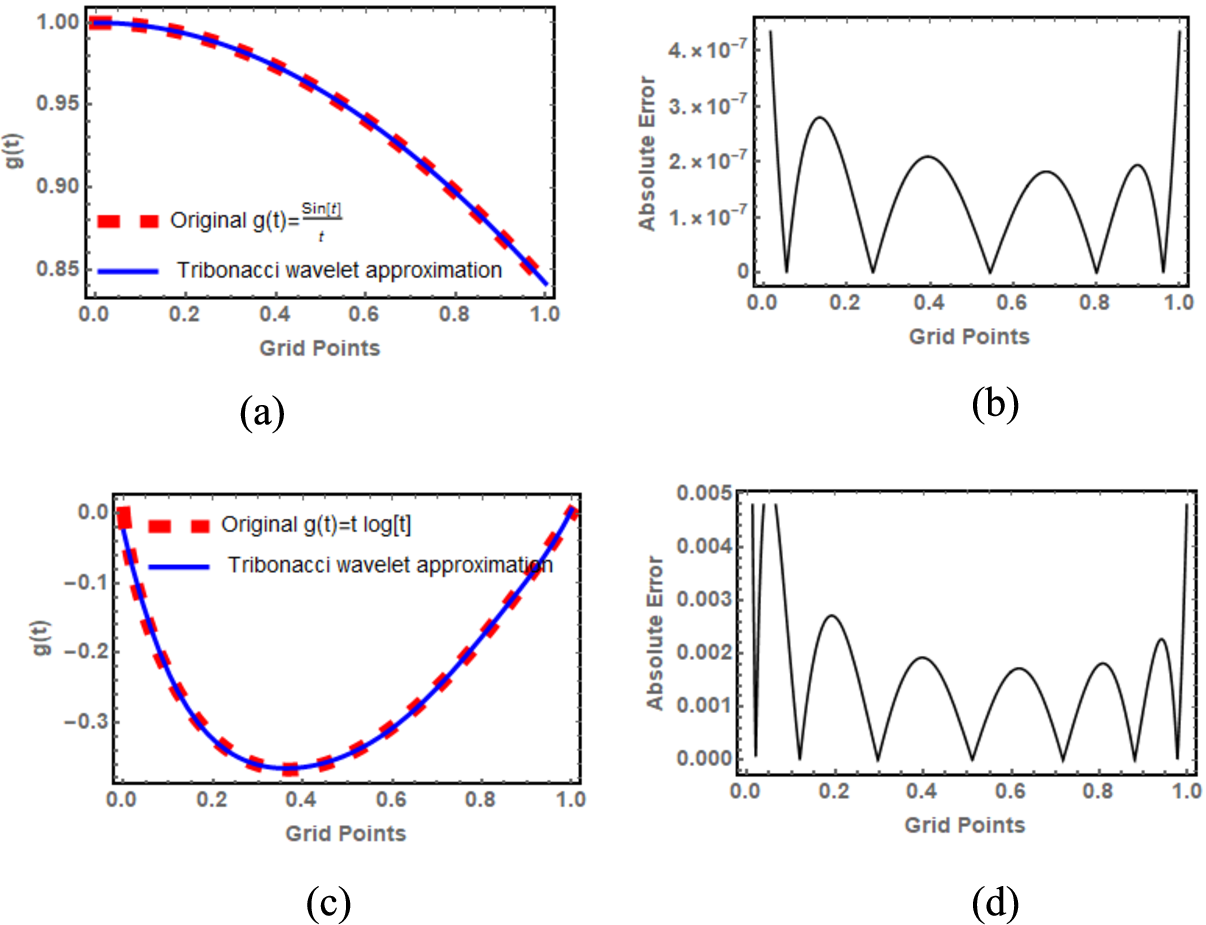} 
  \caption{Plot of function approximation of $\4(\z)$ and corressponding absoulte error  using SOTW method.} 
  \label{P3_fig2}
\end{figure}
\section{Methodology }\label{Method}
This section presents the methodology for computing solution of  nonlinear singular  differential equation of the different form using the semi-orthogonal tribonacci wavelet collocation method (SOTWCM) combined with quasilinearization. Quasilinearization is first used to linearize the NSBVPs. The numerical solutions are then computed using SOTW, followed by the discritization via the collocation method. Additionally, we present the convergence analysis of the SOTW.
\subsection{Quasilinearization}\label{Quasilinrization}
The quasilinearization approach converts a nonlinear differential equations (DE) into a series of linear DE, with the solution determined as the limit of this series. For a comprehensive discussion on quasilinearization, one may refer to the work of \cite{verma2019} and \cite{mandelzweig2001}. Quasilinearization allows for the conversion of any nonlinear ordinary differential equation of the form 
\begin{equation}\label{P3_diffeqn1}
\1''(\z) + \frac{\alpha}{\z} \1' (\z) + g (\z, \1(\z)) = 0, \;\;\;\; 0 < \z  \leq 1, 
\end{equation}
subject to one of the specified boundary conditions:
\renewcommand{\labelenumi}{\alph{enumi})}
\begin{eqnarray}
&& \label{P3_eqn3} \1(0)  =  \7_1,~ \1(1) = \7_2, \\
&& \label{P3_eqn4} \1'(0)  =  \7_3, ~ m \1(1) +n \1'(1) = \7_4,
\end{eqnarray}
into a sequence of linear DE. The \((r + 1)\)th iterative approximation \(\1_{\R+1}(\z)\) to the solution of \eqref{P3_diffeqn1}, subject to boundary conditions given by \eqref{P3_eqn3}, or \eqref{P3_eqn4}, is obtained by solving the associated linear DE. This process is formalized in the following theorem.
\begin{theorem}\label{theorem1}
Consider a function \( g(\z, \1(\z)) \) that is continuous in \( \z \) and twice continuously differentiable with respect to \( \1(\z) \). Let \( \1(\z) \) be the solution to the differential equation \eqref{P3_diffeqn1}, under the specified  boundary conditions \eqref{P3_eqn3}, or \eqref{P3_eqn4}. By using the quasilinearization technique, we describe the iterative scheme for solving the differential equation  \eqref{P3_diffeqn1} with the given boundary conditions through the following set of equations:
\begin{eqnarray}
&& \label{P3_eqn5} \1''_{\R +1}(\z) + \frac{\alpha}{\z} \1'_{\R+1}(\z) = - g (\z, \1_{\R}(\z)) + (\1_{\R+1} - \1_{\R}) \left( - g_{\1}(\z, \1_{\R}(\z)) \right),~ \R = 0,1,2,\cdots, \\
&& \label{P3_eqn7} \1_{\R +1}(0) = \7_1, \quad \1_{\R +1}(1) = \7_2, \\
&& \label{P3_eqn8} \1'_{\R +1}(0) = \7_3, \quad m \1_{\R +1}(1) + n \1_{\R +1}'(1) = \7_4,
\end{eqnarray}
where $\alpha(\geq 0)$, $ \7_1, \7_2, \7_3, \7_4, m, $ and $n$ are arbitrary constants and $g_{\1}$ represents the partial derivative of the function $g$ with respect to $\1$. 
\end{theorem}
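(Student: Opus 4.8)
The plan is to obtain the stated iterative scheme by linearizing the nonlinear term $g(\z,\1(\z))$ through a first-order Taylor expansion about the current iterate. First I would treat $g$ as a function of its second argument and, invoking the hypothesis that $g$ is twice continuously differentiable with respect to $\1$, expand $g(\z,\1_{\R+1}(\z))$ in a Taylor series about $\1_{\R}(\z)$:
\begin{equation*}
g(\z,\1_{\R+1}) = g(\z,\1_{\R}) + (\1_{\R+1}-\1_{\R})\,g_{\1}(\z,\1_{\R}) + \tfrac{1}{2}(\1_{\R+1}-\1_{\R})^2\,g_{\1\1}(\z,\xi),
\end{equation*}
where $\xi$ lies between $\1_{\R}$ and $\1_{\R+1}$. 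Retaining only the terms that are at most linear in the unknown $\1_{\R+1}$ and discarding the quadratic remainder gives the linear approximation
\begin{equation*}
g(\z,\1_{\R+1}) \approx g(\z,\1_{\R}) + (\1_{\R+1}-\1_{\R})\,g_{\1}(\z,\1_{\R}).
\end{equation*}

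Next I would substitute this linearized expression into the governing equation \eqref{P3_diffeqn1} written at the $(\R+1)$th iterate, that is, replace $\1$ by $\1_{\R+1}$ and $g(\z,\1)$ by its linear approximation. Rearranging so that the singular differential operator $\frac{d^2}{d\z^2}+\frac{\alpha}{\z}\frac{d}{d\z}$ acts on $\1_{\R+1}$, while the remaining quantities depending only on the known iterate $\1_{\R}$ are collected on the right-hand side, produces precisely \eqref{P3_eqn5}. Since the operator together with the coefficient $-g_{\1}(\z,\1_{\R})$ multiplying $\1_{\R+1}$ is linear in $\1_{\R+1}$, every step of the recursion is a linear singular boundary value problem, as claimed.

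For the boundary data I would simply observe that both prescribed conditions \eqref{P3_eqn3} and \eqref{P3_eqn4} are already linear in $\1$ and $\1'$ and contain no nonlinear dependence requiring linearization; hence they transfer unchanged to each iterate $\1_{\R+1}$, yielding \eqref{P3_eqn7} and \eqref{P3_eqn8}. This completes the identification of the original nonlinear problem with the stated sequence of linear BVPs.

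The step I expect to require the most care is the justification for discarding the quadratic remainder $\tfrac{1}{2}(\1_{\R+1}-\1_{\R})^2 g_{\1\1}(\z,\xi)$: this is exactly where the twice-continuous-differentiability hypothesis on $g$ is needed, as it guarantees that the omitted term is genuinely second order in the increment $\1_{\R+1}-\1_{\R}$, so that the scheme is a consistent linearization (and, in the classical Bellman--Kalaba theory, converges quadratically once $\1_{0}$ is taken sufficiently close to $\1$). Making this fully rigorous would amount to bounding $g_{\1\1}$ over the relevant range of $\1$ and controlling the successive increments; the remaining manipulations — substitution into \eqref{P3_diffeqn1} and rearrangement into \eqref{P3_eqn5} — are routine.
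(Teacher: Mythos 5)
Your proposal is correct and follows exactly the route the paper intends: the paper's entire proof is the one-line remark that the result ``follows directly from the Taylor series expansion,'' and your first-order expansion of $g(\z,\1_{\R+1})$ about $\1_{\R}$, substitution into \eqref{P3_diffeqn1}, and observation that the boundary conditions are already linear is precisely that argument, spelled out with the correct signs matching \eqref{P3_eqn5}. Your closing remarks on the quadratic remainder and quadratic convergence go beyond what the paper records but are consistent with the classical quasilinearization theory it cites.
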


\begin{proof}\label{P3_E13}
The proof follows directly from the Taylor series expansion. 
\end{proof}
\subsection{The SOTWQCM}\label{SOTWQCM}
As a result of the aforementioned theorem, the DE \eqref{P3_diffeqn1} is obtained in its linearized form. We then apply the SOTW  collocation method to solve the linear differential equation \eqref{P3_eqn5}, subject to the one of the boundary conditions \eqref{P3_eqn7}, or \eqref{P3_eqn8}. The second order derivative present in the DE is approximated by the SOTW  basis as follows:
\begin{align}\label{P3_eqn10}
    \1 '' _{\R+1 }(\z)  \approx  \sum_{n = 1}^{2^{\2-1}}\sum_{m = 0}^{M-1} \h_{n,m}\w_{n,m}(\z).
\end{align}
Integrating the  above equation  \eqref{P3_eqn10} two times from $0$ to $\z$, we get the  solution of the differential equation,
 \begin{eqnarray}
\label{P3_eqn11} && \1 ' _{\R+1 }(\z) \approx  \1 ' _{\R+1 }(0)+ \sum_{n = 1}^{2^{\2-1}}\sum_{m = 0}^{M-1} \h_{n,m}\int_{0}^{\z} \w_{n,m}(\z)  \, d\z
,\\
\label{P3_eqn12} &&  \1  _{\R+1 }(\z) \approx   \1  _{\R+1 }(0)+ \z \1 ' _{\R+1 }(0)+ \sum_{n = 1}^{2^{\2-1}}\sum_{m = 0}^{M-1} \h_{n,m}\int_{0}^{\z} \int_{0}^{\z}\w_{n,m}(\z) \,d\z \, d\z,\\
\label{P3_equn1} &&  \1  _{\R+1 }(\z) \approx   \1  _{\R+1 }(0)+ \z \1 ' _{\R+1 }(0)+ \sum_{n = 1}^{2^{\2-1}}\sum_{m = 0}^{M-1} \h_{n,m} \, P^{2}\w _{\3, \g} (\z),\\
\label{P3_equn2} &&  \1  _{\R+1 }(\z) \approx    C_{T}(\z)+ \sum_{n = 1}^{2^{\2-1}}\sum_{m = 0}^{M-1} \h_{n,m} \, P^{2}\w _{\3, \g} (\z),
\end{eqnarray}
where $ C_{T}(\z)$ stands for the constants of integration, $ P^{2}\w _{\3, \g} (\z)$ represents the second order integration of the $\w _{\3, \g}(\z)$.

Based on the specific nature of the problem, one of the boundary conditions \eqref{P3_eqn7}, or \eqref{P3_eqn8} is used to determine the unknowns. The equations are discretized  by replacing $ \z$ with the collocation points defined as follows:
\begin{eqnarray} \label{collocation}
    \z_{l}= \frac{2l -1}{2^{\2}M}, \;\;\; l = 1, 2, 3, \cdots, (2^{\2-1}M),
\end{eqnarray}
 and substituting them into linearized  equation \eqref{P3_eqn5}. This leads to the following linear system of $ (2^{\2-1}M)$ algebraic equation with $ (2^{\2-1}M)$ unknowns:
 \begin{equation}\label{linear_system}
\begin{aligned}
&\x_{1} (\h _{1, 0}, \h_{1, 1}, \h_{1,2}, \cdots , \h_{1, M -1}, \h_{2,0}, \h_{2,1}, \cdots, \h_{2, M-1}, \cdots, \h_{2^{\2-1},0}, \h_{2^{\2-1},1}, \h_{2^{\2-1},2}, \cdots, \h_{2^{\2-1},M-1}) = 0, \\
&\x_{2} (\h _{1, 0}, \h_{1, 1}, \h_{1,2}, \cdots , \h_{1, M -1}, \h_{2,0}, \h_{2,1}, \cdots, \h_{2, M-1}, \cdots, \h_{2^{\2-1},0}, \h_{2^{\2-1},1}, \h_{2^{\2-1},2}, \cdots, \h_{2^{\2-1},M-1}) = 0, \\
&\vdots ~~~~~~~~~~~~~~~~~~~~\vdots ~~~~~~~~~~~~~~~~~~~~\vdots~~~~~~~~~~~~~~~~~~~~\vdots\\
&\x_{2^{\2-1}M} (\h _{1, 0}, \h_{1, 1}, \h_{1,2}, \cdots , \h_{1, M -1}, \h_{2,0}, \h_{2,1}, \cdots, \h_{2, M-1}, \cdots, \h_{2^{\2-1},0}, \h_{2^{\2-1},1}, \h_{2^{\2-1},2}, \cdots, \h_{2^{\2-1},M-1}) = 0. 
\end{aligned}
\end{equation}
By solving the above system of linear equations, we obtain the SOTW coefficients. These are then substituted back into the equation \eqref{P3_eqn12} to compute the numerical solution. The process is repeated  for $ \R = 1, 2, 3, \cdots$ until the desired accuracy is achieved.
\subsection{Convergence Analysis}\label{CGT}
 We present the following theorem to establish the convergence of the SOTWQCM for the given NSBVPs \eqref{P3_L1}. 
Using SOTW collocation method, we have
 \begin{equation}\label{cgt1_P3}
        h(\z) = \1'' (\z)  =  \sum_{\3 = 1}^{\infty} \sum_{\g = 0}^{\infty} \h_{\3, \g} \w _{\3, \g} (\z).
    \end{equation}
  Integrating the above equation \eqref{cgt1_P3} two times, we get  
   \begin{equation}\label{cgt2_P3}
        \1 (\z)  =  \sum_{\3 = 1}^{\infty} \sum_{\g = 0}^{\infty} \h_{\3, \g} P^{2}\w _{\3, \g} (\z) + C_{T}(\z),
    \end{equation}
    where $C_{T}(\z)$ stands for the constants of integration, $ P^{2}\w _{\3, \g} (\z)$ represents the second order integration of the $\w _{\3, \g}(\z)$.
\begin{theorem}\label{P3_theorem}
    If a continuous function  $ h (\z) = \frac{d^2 \1 }{d\z^2}$  be a square integrable  function defined on the interval $[0,~ 1]$, such that  it is bounded  by $\beta$, i.e., $|h(\z)| \leq  \beta$, for every $\z \in [0,~ 1]. $ Then, the SOTW expansion of $ h(\z)$  in \eqref{eq4_P3}  converges.
\end{theorem}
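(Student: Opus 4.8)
The plan is to reduce the statement to square-summability of the coefficients $\rho_{n,m}$ and then upgrade this to $L^2$-convergence of the series \eqref{cgt1_P3} using the orthogonality structure of the basis. I would begin from the coefficient formula \eqref{P3_L3}, namely $\rho_{n,m} = \langle h, \mathcal{W}_{n,m}\rangle = \int_0^1 h(t)\,\mathcal{W}_{n,m}(t)\,dt$. Since $\mathcal{W}_{n,m}$ is supported on the subinterval $I_n = \big[\tfrac{n-1}{2^{k-1}},\tfrac{n}{2^{k-1}}\big)$ and equals $2^{(k-1)/2}\,\hat T_m(2^{k-1}t-n+1)$ there, the substitution $u = 2^{k-1}t-n+1$ puts the coefficient in rescaled form
\begin{equation*}
\rho_{n,m} = 2^{-(k-1)/2}\int_0^1 h\!\left(\frac{u+n-1}{2^{k-1}}\right)\hat T_m(u)\,du .
\end{equation*}

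Next I would bound $|\rho_{n,m}|$ using only that $h$ is continuous and $|h|\le\beta$. The tribonacci polynomials of \eqref{eq1_P3} are nonnegative on $[0,1]$, hence so are the normalized functions $\hat T_m$. Because $\hat T_m$ does not change sign while the factor $h((u+n-1)/2^{k-1})$ is continuous in $u$, the generalized first mean value theorem for integrals produces a point $\xi_n\in I_n$ with
\begin{equation*}
\rho_{n,m} = 2^{-(k-1)/2}\,h(\xi_n)\,K_m, \qquad K_m := \int_0^1 \hat T_m(u)\,du ,
\end{equation*}
so that $|\rho_{n,m}|\le 2^{-(k-1)/2}\beta K_m$ and $|\rho_{n,m}|^2 \le 2^{-(k-1)}\beta^2 K_m^2$. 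Each $K_m$ is finite, indeed $K_m\le\|\hat T_m\|_2\,\|1\|_2 = 1$ by Cauchy--Schwarz, so every coefficient is controlled by an explicit, resolution-dependent factor.

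Then I would assemble the series one resolution level at a time. For fixed $k$ the supports $I_n$, $n=1,\dots,2^{k-1}$, are pairwise disjoint, so the translates are mutually orthogonal and the level-$k$ energy splits over $n$; summing the coefficient bound over these translates gives
\begin{equation*}
\sum_{n=1}^{2^{k-1}}|\rho_{n,m}|^2 \le 2^{k-1}\cdot 2^{-(k-1)}\beta^2 K_m^2 = \beta^2 K_m^2 ,
\end{equation*}
the geometric growth in the number of translates exactly cancelling the $2^{-(k-1)}$ decay. Since the wavelet is semi-orthogonal, distinct resolution levels are mutually orthogonal as well, so the total $L^2$-energy is the sum over $k$ of the level contributions; it then remains to control these and invoke the Riesz-basis bound of Definition~\ref{def1}, which converts square-summability of $\{\rho_{n,m}\}$ into convergence of \eqref{cgt1_P3} (equivalently, Cauchyness of the partial sums).

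The hard part is precisely this last accounting. The cancellation above leaves a level-$k$ contribution $\beta^2 K_m^2$ that carries \emph{no} decay in $k$, so the crude estimate coming from boundedness alone is not by itself summable across all resolution levels. To close the argument one must either lean directly on the R-function/Riesz-basis hypothesis to assert that the analysis coefficients of the $L^2$ datum $h$ satisfy $\sum|\rho_{n,m}|^2 \le C^{-1}\|h\|_2^2 < \infty$, or extract extra decay from the regularity of $h$ (a Lipschitz or vanishing-moment estimate supplying an additional factor of $2^{-(k-1)}$). I expect the genuine obstacle to be making this bridge rigorous, from the pointwise coefficient bound to summability over levels, rather than the coefficient estimate itself, which is routine.
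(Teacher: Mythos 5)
Your reduction to square-summability of the coefficients is a reasonable opening, but, as you yourself concede, the argument does not close: the estimate $|\rho_{n,m}|\le 2^{-(k-1)/2}\beta K_m$ with only $K_m\le 1$ yields level contributions $\beta^2K_m^2$ that decay neither in the dyadic level $k$ nor in the degree $m$, so nothing forces the tail of the expansion to be small. This is a genuine gap, not a technicality. The ingredient you are missing --- and the one the paper actually uses --- is decay in the polynomial degree, extracted from the structure of the tribonacci polynomials themselves. The paper writes $T_m(y)=T'_{m+1}(y)/(m+1)$ and uses boundedness of $\int_0^1 T'_{m+1}(y)\,dy$ to obtain $|\rho_{n,m}|\le 2^{-k/2}\sqrt{2}\,\beta\,\theta/\bigl(\sqrt{z_m}\,(m+1)\bigr)$, i.e.\ an extra factor $1/\bigl((m+1)\sqrt{z_m}\bigr)$ beyond your bound, which makes the sum over the degree index converge via $\sum_m (m+1)^{-2}<\infty$. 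Moreover, the paper does not estimate the series for $h$ itself but the twice-integrated series \eqref{cgt2_P3} for $s(t)$: it bounds $\bigl|P^2\mathcal{W}_{n,m}(t)\bigr|\le 2^{(1-k)/2}\theta/\bigl(\sqrt{z_m}(m+1)\bigr)$ by the same derivative device and then controls the $L^2$ norm of the truncation error \eqref{P3_cgt1} as a product of convergent series. So the decisive smallness comes from the degree index and the normalization constants $z_m$, not from the level count, which is exactly where your accounting stalls.

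Two further points. Your fallback --- invoking the Riesz-basis/Bessel inequality to get $\sum|\rho_{n,m}|^2\le C^{-1}\|h\|_2^2$ and then the upper Riesz bound to convert square-summability into $L^2$-convergence --- would be a legitimate and arguably cleaner route, but it is a different argument from the paper's and you have not carried it out; in particular it presupposes that the SOTW family actually satisfies Definition~\ref{def1}, which you would need to verify. Also, your mean-value step silently uses $\hat T_m\ge 0$ on $[0,1]$; this is true (by induction from \eqref{eq1_P3}) but should be stated. As written, your proposal correctly locates the difficulty but does not resolve it, so it does not yet constitute a proof.
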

\begin{proof}\label{P3_proof}
    In \eqref{cgt2_P3}, truncating the expansion, we have
     \begin{equation}\label{cgt3_P3}
        \1_{\2,\5}(\z)  =  \sum_{\3 = 1}^{2^{\2-1}} \sum_{\g = 0}^{M-1} \h_{\3, \g} P^{2}\w _{\3, \g} (\z) + C_{T}(\z).
    \end{equation}
    Now, we define the error  as follows:
    \begin{equation}\label{P3_cgt1}
\left|\6_{\2, \5}\right| \; =\left| \1(\z) - \1_{\2,\5}(\z)\right| =\;  \left| \sum_{\3 = 2^{\2}}^{\infty} \sum_{\g = \5}^{\infty} \h_{\3, \g} P^{2}\w _{\3, \g} (\z) \right|.
\end{equation}
The $L^{2}$-norm of the given  error function is defined as,
\begin{equation}\label{P3_cgt2}
\lVert \6_{\2, \5}\rVert_{2}^{2} \; =  \int_{0}^{1} \Big (\sum_{\3 =2^{\2} }^{\infty}  \sum_{\g = \5}^{\infty}   \h_{\3, \g} P^{2}\w _{\3, \g} (\z)\Big)^{2}\, d\z.
\end{equation}
After expanding above equation, we have
\begin{equation}\label{P3_cgt3}
    \lVert \6_{\2, \5}\rVert_{2}^{2} \; =   \sum_{\3 = 2^{\2}}^{\infty}  \sum_{\g = \5}^{\infty} \sum_{\1 = 2^{\2}}^{\infty}  \sum_{\R = \5}^{\infty} \h_{\3, \g}  \h_{\1, \R} \int_{0}^{1}  P^{2}\w _{\3, \g}(\z) ~ P^{2}\w _{\1, \R}(\z) \;  d\z.
\end{equation}
Now, as $\z \in [0,~1],$ we have
\begin{eqnarray}
  && \label{P3_eq3}
     \left|  P^{2}\w _{\3, \g}(\z)  \right| \leq  \int_{0}^{\z}  \int_{0}^{\z} \w _{\3, \g}(\z)   d\z d\z,\\
       && \label{P3_eq4}
       \left|  P^{2}\w _{\3, \g}(\z)  \right| \leq  \int_{0}^{\z}  \int_{0}^{1}  \w _{\3, \g}(\z)   d\z d\z.
     \end{eqnarray}
Now, by using equation  \eqref{eq2_P3}, we have
\begin{equation*}\label{P3_E14}
     \left|  P^{2}\w _{\3, \g}(\z)  \right| \leq  2^{\frac{\2 - 1}{2}}  \frac{1}{\sqrt{z_{\g}}}\int_{0}^{\z}  \int_{\frac{\3-1}{2^{\2-1}}}^{\frac{\3}{2^{\2-1}}} T_{\g}(2^{\2-1} \z - \3 +1) d\z d\z.
\end{equation*}
By using, change of variable, $2^{\2 -1} \z - \3 +1 = y$, and taking the  relation $ T_{\g}(y)=  \frac{T_{\g+1}'(y)}{\g+1}$, we get,
\begin{equation*}\label{P3_E114}
     \left|  P^{2}\w _{\3, \g}(\z)  \right| \leq  2^{\frac{1-\2}{2}}  \frac{1}{\sqrt{z_{\g}}}\int_{0}^{\z}\int_{0}^{1}  \frac{T_{\g+1}'(y)}{\g+1} \;dy\; d\z.
\end{equation*}
 Now, using continuity and boundedness  of $ T_{\g}(y)$ implies that there exist a real constant $\theta$ such that $ \int_{0}^{1}T'_{\g+1}(y) \  dy = \theta$, we get the bound as follows:
\begin{equation*}\label{P3_E15}
     \left|  P^{2}\w _{\3, \g}(\z)  \right| \leq 2^{\frac{1-\2}{2}} \frac{\theta}{\sqrt{z_{\g}} (\g+1)}.
\end{equation*}
    The SOTW expansion coefficients of $h(\z)$ can be written  as follows:
    \begin{eqnarray*}\label{P3_E17}
     \begin{aligned}
         & \h_{\3, \g} = \int_{\frac{n-1}{2^{\2-1}}}^{\frac{n}{2^{\2-1}}} h(\z) 2 ^{\frac{\2 - 1}{ 2}} \frac{{\e}_{\g} (2^{\2 -1 } \z  - \3 +1 )}{\sqrt{z_{\g}}}  \ d\z,\\
         & \h_{\3, \g} = \frac{2 ^{\frac{1-\2}{2}}}{\sqrt{z_{\g}}} \int_{0}^{1} h \Big(\frac{y+n-1}{2^{\2-1}} \Big) T_{\g}(y)   \ dy,\\
          & | \h_{\3, \g}| \leq \frac{2 ^{\frac{-\2}{2}}}{\sqrt{z_{\g}}}  \sqrt{2} \beta  \int_{0}^{1} \left|\frac{T'_{\g+1}(y)}{\g+1}   \right | \ dy,\\
           &| \h_{\3, \g}| \leq \frac{2 ^{\frac{-\2}{2}}}{\sqrt{z_{\g}}}  \sqrt{2} \beta  \frac{\theta}{\g+1}.
    \end{aligned}
    \end{eqnarray*}
Now, putting these values in equation \eqref{P3_cgt3}, we get
\begin{equation*}\label{P3_E20}
     \lVert \6_{\2, \5}\rVert_{2}^{2} \; \leq 2^{-2 \2} \beta^{2}\theta^{4} \sum_{\3 = 2^{\2}}^{\infty} 
 \left(\sqrt{\frac{2}{z_{\3}}} \right)^{2} \sum_{\g = \5}^{\infty} \frac{1}{(\g+1)^{2}} \sum_{\1 = 2^{\2}}^{\infty}   \left(\frac{\sqrt{2}}{\sqrt{z_{\1}}} \right)^{2}\sum_{\R = \5}^{\infty}  \frac{1}{(\R+1)^{2}},
\end{equation*}
since all four series converge, and as $ \2, \5 \rightarrow \infty$, then $\lVert \6_{\2, \5}\rVert  \rightarrow 0$.
    \end{proof}
\section{Numerical Illustration }\label{Numerical}
In this section, the proposed method is corroborated using several test examples. All experiments are performed in Mathematica $11.3$ on a system equipped with a $64-$bit Intel Core $i7$ CPU and $8$ GB of RAM. We present the numerical results with \(\2 = 1\) and different $M$ across all test cases. We present the detailed steps to compute the numerical solutions in algorithmic form. The maximum absolute error is also presented to demonstrate the accuracy and features of the proposed method, defined as follows:
 \begin{definition}\label{definition}
For a function \(\1(\z)\) defined over the domain \([0, 1]\), we define two types of errors  using the SOTWQCM solution \(\1_{M}(\z)\):

\begin{enumerate}
    \item \textbf{Maximum Absolute Error:}
    \begin{equation*}
    \delta_{M}^{max} = \max_{\z_i \in [0, 1]} \delta_{M}(\z_i),
    \end{equation*}
    where $\delta_{M}(\z_i) = \left| \1(\z_i) - \1_{M}(\z_i) \right|$ is the absolute error and \(\1(\z)\) is the exact solution.

    \item \textbf{Maximum Residual Error:}
    \begin{equation*}
    e^{max} = \max_{\z_i \in [0, 1]} e_{res}(\z_i),
    \end{equation*}
    where $e_{res}(\z_i) = \left|  {\1_{M}}''(\z_i) + \frac{\alpha}{\z_i} {\1_{M}}' (\z_i) + g ((\z_i), {\1_{M}}(\z_i))\right|$ is the residual error and \(\1_{M}(\z)\) is the SOTWQCM solution.
\end{enumerate}\end{definition}
\subsection{Algorithm for SOTWQCM}\label{Algorithm}
 This subsection provides the primary steps of the suggested method  in detail for solving   NSBVPs \eqref{P3_diffeqn1}. The essential steps are outlined below.
 \begin{description}
      \item[Step 1:] Using the quasilinearization technique, the nonlinear SBVP \eqref{P3_diffeqn1}, along with the boundary conditions \eqref{P3_eqn3}, and \eqref{P3_eqn4}, is  converted into a sequence of linear differential equations \eqref{P3_eqn5}.
    \item [Step 2:] Implement the SOTWQCM approach on the linearized  differential equations, incorporating the appropriate boundary conditions specific to the problem.
\item [Step 3:] The equations \eqref{P3_eqn10}, \eqref{P3_eqn11} and  \eqref{P3_eqn12} are discretized by replacing \(\z\) with the collocation points \eqref{collocation} and then inserting these equations into \eqref{P3_eqn5}.
\item [Step 4:]  We arrive at a system of linear algebraic equations of size $ (2^{\2-1}M)\times (2^{\2-1}M)$. 
\item [Step 5 :] Solve these equations, starting with an initial guess $\1_0$, to determine the SOTW coefficients.
\item [Step 6 :]  Substitute the obtained wavelet coefficients into solution equation  to derive the first approximate numerical solution.
\item [Step 7 :] Repeat steps $2-6$ for $\R$ iterations to refine the SOTW  solution until the desired accuracy is achieved.
  \end{description}

\setcounter{example}{0} 
\renewcommand{\theexample}{\ref{Numerical}.\color{red}\arabic{example}}
\begin{example}\label{EX1_P3}
Let us assume the following  NSBVPs:
\begin{equation*}\label{P1_eqn38}
    \1''(\z) + \frac{2}{\z} \1' (\z) + \1^{5} (\z) = 0, ~\1'(0) = 0,~\1(1) = \sqrt{\frac{3}{4}}.
\end{equation*}
\end{example}
The exact solution of this  NSBVP is $\1 (\z) = \sqrt{\frac{3}{3+\z^{2}}}$. This problem plays a significant role in the analysis of stellar structures \cite{chandrasekhar1957introduction}. Table \ref{table2:example1_P3} provides detailed results, showcasing the CPU time (in seconds) and maximum absolute error for \( \2 = 1 \) across different values of \( M \) using the initial guess vector $\left[1, 1, \dots, 1\right]$. In Table \ref{table3:example1_P3}, the error comparison between  SOTWQCM, the NSFD method  \cite{kayenat2022}, Taylor wavelet \cite{taylor2020}, and the bvp4c MATLAB solver \cite{shampine2003} is provided, highlighting the superior performance of our approach.
\begin{table}[H]
  \centering
  \caption{Performance Analysis: The CPU time (s) and $\delta_{M}^{max}$ for the SOTWQCM are presented for \( \2 = 1 \) and various \( M \) values for the given Example \ref{EX1_P3}.}\label{table2:example1_P3}
  \resizebox{16.5cm}{0.669cm}
  {
    \begin{tabular}{|c|c|c|c|c|c|}
    \hline
     $ M$ & $ 2$ & $4$ & $ 6$ & $ 8 $ & $  9$ \\ \hline
    CPU time (s) &  $ 0.5 $&  $ 0.827$& $ 1.641$ & $ 5.126$& $ 16.671$ \\ \hline
    $\delta_{M}^{max}$   & $ 0.00177828$ & $ 0.0000285214$ & $0.0000267767$ & $ 2.17433E - 07$& $ 6.90402E - 08$\\ \hline
\end{tabular}}
\end{table}
\begin{table}[H]
  \centering
  \caption{ Comparison of the $\delta_{M}^{max}$ achieved by SOTWQCM with  other methods for the given  Example \ref{EX1_P3}.}\label{table3:example1_P3}
  \resizebox{17.5cm}{0.8 cm}{
    \begin{tabular}{|c|c|c|c|c|c|c|}
    \hline
     \multicolumn{2}{|c|}{  SOTWQCM}  &  \multicolumn{2}{|c|}{  NSFD \cite{kayenat2022} }     & \multicolumn{2}{|c|}{ bvp4c \cite{shampine2003} }  & Taylor wavelet \cite{taylor2020}\\
  \cline{1-7}   $ \2 =1,~ M = 9$ & $ \2 =1,~ M = 10$& $ N = 256$ & $ N = 512$ & $ N = 256$  &  $ N = 512$  &  $M = 9$ \\
    \hline 
    $6.90402E - 09$ & $6.74738E - 10$ & $ 1.27473E - 06$ & $3.18683E - 07$ & $ 5.00081E - 06 $ & $ 5.02068E - 06$ &  $ 6.52E - 06$
    \\ \hline
 \end{tabular}}
\end{table}

\begin{example}\label{EX2_P3}
Let us assume  the  following NSBVPs \cite{akvreview2020}:
\begin{equation*}\label{P3_eqn13}
    \1''(\z) +\frac{1}{\z} \1'(\z) + e^{\1(\z)} = 0, ~ \1'(0) = 0, ~\1(1) = 0.
\end{equation*}
\end{example}
 The exact solution of this  NSBVP  is $\1 (\z) = 2 \ln \Big( \frac{4 - 2 \sqrt{2}}{( 3 - 2 \sqrt{2}) \z ^{2}+1}\Big )$. This problem is relevant to the study of thermal explosions occurring in cylindrical vessels \cite{chambre1952solution}. Starting with an initial guess vector of $[0, 0, \cdots, 0]$, Table \ref{table2:example2_P3} provides detailed results, showcasing the CPU time (in seconds) and maximum absolute error for \(\2 = 1\) and various values of \(M\). 
 
 Table \ref{table3:example2_P3} presents a comparison of the errors obtained using  SOTWQCM, NSFD \cite{kayenat2022}, Fibonacci wavelet \cite{fibonacci2020}, and the bvp4c MATLAB solver \cite{shampine2003}. The results clearly demonstrate the superiority of our method.
\begin{table}[H]
  \centering
  \caption{Performance Analysis: The CPU time (s) and $\delta_{M}^{max}$ for the SOTWQCM are presented for \( \2 = 1 \) and various \( M \) values for the given Example \ref{EX2_P3}.}\label{table2:example2_P3}
  \resizebox{16.5cm}{0.678cm}
  {
    \begin{tabular}{|c|c|c|c|c|c|}
    \hline
     $M$ & $2$ & $4$ & $6$ & $8$ & $9$ \\ \hline
    CPU time (s)&  $0.719$&  $ 6.031$& $ 3.64$ & $ 4.312$& $ 5.641$ \\ \hline
    $\delta_{M}^{max}$   & $ 0.00141529$ & $ 4.63284E - 06$ & $ 5.68708E - 06$ & $ 2.0193E - 08$& $ 5.88003E - 09$\\ \hline
\end{tabular}}
\end{table}
\begin{table}[H]
  \centering
  \caption{ Comparison of the $\delta_{M}^{max}$ achieved by SOTWQCM with  other methods for the given Example \ref{EX2_P3}.}\label{table3:example2_P3}
  \resizebox{17.5cm}{0.83 cm}{
    \begin{tabular}{|c|c|c|c|c|c|c|}
    \hline
     \multicolumn{2}{|c|}{ SOTWQCM } &  \multicolumn{2}{|c|}{ NSFD \cite{kayenat2022} }     & \multicolumn{2}{|c|}{ bvp4c \cite{shampine2003} } & Fibonacci   \cite{fibonacci2020}\\
  \cline{1-7}   $ \2 =1,~ M = 9$ & $ \2 =1,~ M = 10$& $ N = 256$ & $ N = 512$ & $ N = 256$  &  $ N = 512$ & $M = 5$ \\
    \hline 
    $5.88003E - 09$ & $ 7.79022E - 11$ & $ 8.75833E - 06$ & $ 2.65175E - 06$ & $ 1.431237922E - 05 $ & $ 1.33574449E - 05$ & $ 3.38689E - 07$ 
    \\ \hline
 \end{tabular}}
\end{table}
\begin{example}\label{EX3_P3}
Let us consider the  following NSBVPs:
\begin{equation*}\label{P3_eqn14}
    \1''(\z) +\frac{3}{\z} \1'(\z) + \Big ( \frac{1}{8 \1^{2}} -\frac{1}{2} \Big ) = 0, ~ \1'(0) = 0, ~\1(1) = 1.
\end{equation*}
\end{example}
The exact solution to this  NSBVP is not available. NSBVPs of this type have been explored in work such as \cite{dickey1989} and \cite{baxley1998}. This equation plays a crucial role in the analysis of displacements and stresses within a shallow membrane cap. Using the initial guess vector \( [0.5, 0.5, 0.5, \dots, 0.5] \), the SOTWQCM maximum residual error for \( M = 4, 5 \) along with the comparison with HWCM \cite{verma2019} and bvp4c MATLAB solver \cite{shampine2003} are presented in Table \ref{table6:example3_P3}.
\begin{table}[H]
  \centering
  \caption{ Comparison of the $e^{max}$  achieved by SOTWQCM with  other methods for the given Example \ref{EX3_P3}.}\label{table6:example3_P3}
  \resizebox{16.5cm}{0.72cm}{
    \begin{tabular}{|c|c|c|c|c|c|}
    \hline
     \multicolumn{2}{|c|}{ SOTWQCM } &  \multicolumn{2}{|c|}{ HWCM \cite{verma2019} }     & \multicolumn{2}{|c|}{ bvp4c \cite{shampine2003} } \\
  \cline{1-6}   $ \2 =1,~ M = 4$ & $ \2 =1,~ M =5 $& $ J = 2$ & $ J = 4$ & $ n = 100$  &  $ n = 200$  \\
    \hline 
    $3.90847E - 07$ & $4.01346E - 08$ & $ 0.000410692$ & $0.0000861912$ & $ 7.539199E - 01 $ & $ 7.539199E - 01$
    \\ \hline \end{tabular}}
\end{table}

\begin{example}\label{EX4_P3}
    Let us assume  the following  first kind of nonlinear EFTE of third order \cite{wazwaz2015}
    \begin{equation*}
        \1'''(\z) +\frac{6}{\z} \1''(\z) +\frac{6}{\z^{2}} \1'(\z) - 6 (10 +2 \z^{3}+ \z^{6})e^{- 3 \1(\z)} = 0, \;\;  \1(0) = 0,\;\; \1'(0) = \1''(0) =  0.
    \end{equation*}
    \end{example}
    The exact solution of this NSBVP is $ \1 (\z) = \log(1+{\z^{3}})$. Using an initial guess vector $[0, 0, 0, \cdots, 0]$, the solution of the third-order nonlinear EFTE is obtained by SOTWQCM. Table \ref{table41:example2_P3} presents detailed results, including CPU time (in seconds) and the maximum absolute error for \( \2 = 1 \) across different values of \( M \).
    
    In Table \ref{table2:example4_P3}, we compare the maximum absolute error for \( \2 = 1 \) across various values of \( M \) using SOTWQCM, the bvp4c MATLAB solver \cite{shampine2003}, and the Haar wavelet collocation method \cite{singh2020}. The results indicate that as \( M \) increases, the error decreases, demonstrating the effectiveness of larger \( M \) values in improving accuracy.
\begin{table}[H]
  \centering
  \caption{Performance Analysis: The CPU time (s) and $\delta_{M}^{max}$ for the SOTWQCM are presented for \( \2 = 1 \) and various \( M \) values for the given Example \ref{EX4_P3}.}\label{table41:example2_P3}
  \resizebox{16.5cm}{0.68cm}
  {
    \begin{tabular}{|c|c|c|c|c|c|}
    \hline
     $M$  & $4$ & $5$& $7$ & $9$ & $11$ \\ \hline
    CPU time (s)&  $ 8.36$& $ 18.796$  & $60.406$ & $ 119.891$& $ 200.765$ \\ \hline
    $\delta_{M}^{max}$   & $ 0.000377157$ & $0.000139582$ & $ 5.81953E - 06$ & $ 3.64928E - 07$& $ 8.17307E - 08$\\ \hline
\end{tabular}}
\end{table}
\begin{table}[H]
  \centering
  \caption{Comparison of the $\delta_{M}^{max}$ achieved by SOTWQCM with  other methods for the given Example \ref{EX4_P3}.}\label{table2:example4_P3}
  \resizebox{17.5cm}{0.75 cm}{
    \begin{tabular}{|c|c|c|c|c|c|c|}
    \hline
     \multicolumn{3}{|c|}{ SOTWQCM} & \multicolumn{2}{|c|}{ HWCM \cite{singh2020}}   & \multicolumn{2}{|c|} { bvp4c  \cite{shampine2003}} \\
  \cline{1-7}  $\2 =1, M = 10$ & $ \2 =1,  M = 12$ & $  \2 =1, M = 13$   & $J = 2$ & $J = 9$ & $n = 100$& $n = 200$ \\ 
   \cline{1-7}  $1.05861E-07$ & $ 1.14506E-08$ & $ 3.89139E-09 $   & $ 1.627E-03$ & $ 1.098E-07$  & $1.880297E-08$ & $1.879164E-08$\\ \hline
\end{tabular}}
\end{table}

\begin{example}\label{EX5_P3}
    Let us assume  the following  second kind of nonlinear EFTE of third order \cite{wazwaz2015}
    \begin{equation*}
        \1'''(\z) - \frac{2}{\z} \1''(\z)  = \1^{2}(\z) + \1(\z) - \z^{6} e^{2 \z} + 7 \z^{2} e^{\z} + 6 \z e^{\z}  - 6 e^{\z}, \;\;  0 \leq \z \leq 1,  
    \end{equation*}
    under the specified boundary conditions,
    \begin{equation*}
        \1(0) = 0,\;\;\1'(0) =  0, \;\; \1(1) = e.
    \end{equation*}
    \end{example}
    The exact solution of this nonlinear SBVP is $ \1 (\z) = \z^{3} e ^{\z}$. Using an initial guess vector \([0,0,0, \dots, 0]\), the solution of the third-order nonlinear EFTE is obtained using SOTWQCM. Table \ref{table411:example2_P3} presents detailed results, including CPU time (in seconds) and the maximum absolute error for \( \2 = 1 \) across different values of \( M \).  

In Table \ref{table2:example5_P3}, the maximum absolute error is compared for \( \2 = 1 \) and various values of \( M \) using SOTWQCM, the Haar wavelet collocation method \cite{singh2020}, and the bvp4c MATLAB solver \cite{shampine2003}. The results show that increasing \( M \) leads to a reduction in error, highlighting the improved accuracy with larger values of \( M \).
\begin{table}[H]
  \centering
  \caption{Performance Analysis: The CPU time (s) and $\delta_{M}^{max}$ for the SOTWQCM are presented for \( \2 = 1 \) and various \( M \) values for the given Example \ref{EX5_P3}.}\label{table411:example2_P3}
  \resizebox{16.5cm}{0.68cm}
  {
    \begin{tabular}{|c|c|c|c|c|c|}
    \hline
     $M$  & $4$ & $5$& $7$ & $9$ & $11$ \\ \hline
    CPU time (s)&  $ 12.907$& $ 31.437$  & $96.094$ & $ 226.671$& $ 433.609$ \\ \hline
    $\delta_{M}^{max}$   & $ 0.000123202$ &$0.000046141$  & $ 1.11937E - 06$ & $ 4.23564E - 06$& $ 6.19893E - 09$\\ \hline
\end{tabular}}
\end{table}
\begin{table}[H]
  \centering
  \caption{Comparison of $ \delta_{M}^{max}$ using  SOTWQCM with other methods for the given Example \ref{EX5_P3}.}
  \resizebox{17.5cm}{0.8cm}{
  \begin{tabular}{|c|c|c|c|c|c|c|c|}
    \hline
  \multicolumn{4}{|c|}{ \text{  SOTWQCM}} & \multicolumn{2}{|c|}{\text{HWCM} \cite{singh2020}} & \multicolumn{2}{|c|}{\text{ bvp4c} \cite{shampine2003}}     \\
  \cline{1-8}   $ \2 =1,M =5$ &  $ \2 =1,M = 10$ & $ \2 =1, M = 12$ & $\2 =1, M = 13$   & $J = 3$ & $J = 9$ & $n = 100$ & $n = 200$  \\ 
   \cline{1-8} $ 0.000046141$ & $ 3.6534E-08$ & $ 4.19022E-09$ & $ 6.6657E-10 $ &  $ 1.41522E-03$ & $ 3.473E-07$ &  $ 2.417528E-07$ & $ 2.490911E-07$  \\ \hline
\end{tabular}}
\label{table2:example5_P3}
\end{table}

\begin{example}\label{EX8_P3}
Consider the following   linear singular  perturbation problem as follows:
\begin{equation*}
      -\epsilon   \1''(\z) +\1(\z) = \z,\;\;\; 0 \leq \z \leq 1,
\end{equation*}
 under the specified boundary conditions,
\begin{align*}
    \1 ( 0) = 1,\;\;\; \1(1) = 1+ e^{\frac{-1}{\sqrt{\epsilon}}}.
\end{align*}
\end{example}
The exact solution of this linear singular perturbation BVP is $ \1 (\z) = \z + e^{\frac{- \z}{\sqrt{\epsilon}}}$. This problem has been addressed using the  SOTWQCM, for \(\2 = 1\) with several   values of \(M\) and \(\epsilon\), where \(\epsilon = \frac{1}{32}\), \(\epsilon = \frac{1}{64}\), and \(\epsilon = \frac{1}{128}\). The CPU time (in seconds), maximum  absolute error for  various values of \(\epsilon\) and \(M\) are presented in Table~\ref{table:example8_P3}. By increasing the value of \(M\), better accuracy and efficiency can be achieved. 

In Table \ref{table:example8_P3}, we also presents the comparison of maximum absolute error using SOTWQCM  and bvp4c MATLAB solver \cite{shampine2003} by choosing the different value of $\epsilon$.
\begin{table}[H]
  \centering
  \caption{Comparison of   $\delta_{M}^{max}$, CPU time (s) for $\2 = 1$ across different value of  $M$ and \(\epsilon\)\ using SOTWQCM with bvp4c MATLAB solver \cite{shampine2003} of Example \ref{EX8_P3}.}
  \label{table:example8_P3}
 {\setlength{\tabcolsep}{10pt} 
    \begin{tabular}{|c|c|c|c|c|}
     \cline{1-5}     $\epsilon$ &  $\2= 1$ & $\delta_{M}^{max}$ & CPU time (s)   &  bvp4c \cite{shampine2003}\\ 
     \hline
     & $M = 10$ & $	4.52493E-07	$ & $ 29.187$  & $2.5816207E-04$ \\
      \cline{2-5}  
      $\frac{1}{32}$ & $M = 12$ & $	5.63276E-08	$ & $ 51.767$ &$1.24420491E-04$ \\
      \cline{2-5}
       & $M = 14$ & $	5.75644E-09	$ & $85.235$ & $6.636629E-05$\\
      \hline
        & $M = 10$ & $3.80832E-06 $ & $29.671$ &  $7.0150918E-05$\\
     \cline{2-5}  
      $\frac{1}{64}$& $M = 12$ & $	3.21547E-07	$ & $51.937$ & $1.5422064E-04$\\
      \cline{2-5}  
      & $M = 14$ & $	2.79873E-08$ & $ 82.483$  & $2.321384E-04$\\
      \hline
       & $M = 10$ & $0.0000504751	$ & $29.0$ & $2.436347E-04$ \\
     \cline{2-5}  
       $\frac{1}{128}$ & $M = 12$ & $ 2.2555E-06	$ & $ 50.859$  & $1.2441761E-04$\\
      \cline{2-5}  
      & $M = 14$ & $	3.00767E-07	$ & $ 82.874$ & $6.6364803E-05$\\
      \hline 
\end{tabular}}
\end{table}

\section{Conclusion} \label{conclusion_P3}
In this work, we introduce a new class of semi-orthogonal  wavelets and  refer them as Tribonacci wavelet.  An efficient SOTWQCM is developed and applied to solve non-linear singular differential equations, including Lane-Emden equations, Emden-Fowler equations, and singularly perturbed problems. The solutions obtained using this method, along with error analysis and comparison with recently developed techniques, indicate that the SOTWQCM  yields the highest accuracy in approximate solutions, even for small values of  \(M\) and \(\2\). This highlights its ability to provide reliable and precise results with reduced computational effort. Numerical results have been included to illustrate the precision and reliability of the proposed approach. This underscores the superiority of the proposed SOTWQCM method compared to other techniques such as the bvp4c MATLAB solver \cite{shampine2003}, various wavelet-based methods \cite{taylor2020,fibonacci2020,singh2020}, NSFD \cite{kayenat2022},  while offering reduced computational effort and time. Therefore, the SOTWQCM method offers a straightforward and effective strategy to solve non-linear singular differential equations.
\section*{Acknowledgement}
The first author is very grateful to the DST-Inspire fellowship (DST-Inspire Code - IF210618) for providing the financial support necessary to conduct this research. The first author is also thankful to Narendra Kumar, a research scholar at IIT Jodhpur for his support and guidance. 

\section*{Conflict of interest}
The authors declare that they have no conflicts of interest to report.

\bibliographystyle{plain}
\bibliography{P3ankita.bib}

\end{document}